\newtheorem{remark}{Remark}
\newtheorem{assumption}{Assumption}
\newtheorem{theorem}{Theorem}
\newtheorem{corollary}{Corollary}
\newtheorem{proof*}{Proof*}
\newtheorem{definition}{Definition}
\begin{document}

\baselineskip=17pt

\title[]
{Risk-sensitive Semi-Markov Decision Problems with Discounted Cost and General Utilities}
\thanks{}
\author[]{Arnab Bhabak}
\address{Department of Mathematics\\
Indian Institute of Technology Guwahati\\
Guwahati, Assam, India}
\email{bhabak@iitg.ac.in}

\author[]{Subhamay Saha}
\address{Department of Mathematics\\
Indian Institute of Technology Guwahati\\
Guwahati, Assam, India}
\email{saha.subhamay@iitg.ac.in}


\date{}

\begin{abstract} In this article we consider risk-sensitive control of semi-Markov processes with a discrete state space. We consider general utility functions and discounted cost in the optimization criteria. We consider random finite horizon and infinite horizon problems. Using a state augmentation technique we characterise the value functions and also prescribe optimal controls.
\vspace{2mm}

\noindent
{\bf 2010 Mathematics Subject Classification:} 90C40; 93E20;

\vspace{2mm}

\noindent
{\bf Keywords:} Semi-Markov processes; Risk-sensitive Control; General utilities; State augmentation; Optimal policies.

\end{abstract}

\maketitle

\section{Introduction} In this article we consider risk-sensitive optimization problems for semi-Markov processes on a countable state space. We consider non-negative running cost function, which is also assumed to be bounded above. The cost functional also includes the discount factor. The aim of the decision maker is to minimize the expected utility of the discounted cost accumulated over an infinite time horizon. In order to solve the infinite horizon problem we first consider an auxiliary random finite horizon problem, where the optimization is upto the Nth jump time of the process. The finite horizon problem can also be of independent interest. Then using the finite horizon problem we solve the infinite horizon problem via an appropriate limiting argument. 

Since in an optimal control problem, the cost payable is a random quantity, the standard approach is to minimize the expectation of the cost functional. Such an approach is referred to as the risk-neutral approach, because expectation minimization does not take into account the ``risk" factor which is generally captured by the higher moments. This can be a major issue in real life scenarios. In order to address this issue, the popular remedy is to consider expectation of the exponential of the cost functional. This is known as the risk-sensitive approach. Since the pioneering work of Howard and Matheson \cite{Matheson72}, there has been a lot of work on risk-sensitive control of both discrete and continuous time stochastic processes. Risk-sensitive control problems has been studied in literature for discrete time Markov chains, see \cite{Rieder14, Meyn02, Sobel87, Stettner07, Marcus96, Jask07}, for diffusions, see \cite{Biswas18, Kumar10, McEneaney95, Robin05, Nagai96, Whittle90}, for continuous-time Markov chains, see \cite{Saha14, Wei19, Zhang19, Pal13, Chen19}. The literature on risk-sensitive control of semi-Markov processes is very few. In \cite{Cadena16} the authors consider risk-sensitive control of semi-Markov processes with average cost criterion. There the state space is assumed to be finite and the sojourn time distributions are assumed to have a compact support. In \cite{Lian18}, the authors consider risk-sensitive control of semi-Markov processes on a fixed finite horizon $[0,T]$. To the best of our knowledge, this is first paper dealing with risk-sensitive control of semi-Markov process for a discounted cost criterion over the infinite horizon. In this paper we consider general utility functions $U$ which can be either concave or convex. The classical risk-sensitive case corresponds to the exponential utility function. But here we allow power utilities, logarithmic utilities and so on. When $U$ is convex the decision maker is said to be risk-averse, while he/she is said to be risk-seeking when the utility function is concave. For further discussions and interpretations of utility functions see \cite{Rieder14}. The rest of the paper is organized as follows. In Section 2, we describe our model and the control problem. In Section 3 we investigate the random finite horizon problem. Finally in Section 4, we analyse the infinite horizon problem.
\section{The Control Model}
The semi-Markov decision problem(SMDP) model that we are interested in is
\begin{align*}
\{E,A,(A(i), i\in E), Q(.,.\vert i,a), C(i,a),U(.)\},
\end{align*}
where the individual components has the following interpretation:
\begin{itemize}
	\item $E$ is a countable state space. Without loss of generality, we take $E=\{1,2,\ldots\}$.
	\item $A$ is the action space, which is assumed to be Borel space endowed with the Borel $\sigma$-algebra $\mathcal{A}$.
	\item $A(i)\in \mathcal{A}$ denotes the set of all admissible actions in state $i$.
	 Let $K:=\{(i,a)\vert i\in E, a\in A(i)\}$ be the set of all admissible state action pairs. 
	 \item $Q(\cdot,\cdot\vert i,a)$ is a semi-Markov kernel on $[0,\infty)\times E$ given $K$. We assume that $Q(0,j|i,a)=0$ for any $j\in E$ and $(i,a)\in K$. It describes the transition mechanism of the controlled process. Thus if $a\in A(i)$ is the action chosen in state $i$, then for any $t>0$ and $j\in E$, $Q(t,j|i,a)$ is the joint probability that the sojourn time in state $i$ will be less than or equal to $t$ and the next transition will be into state $j$. 
	 \item $C:K\rightarrow [0,\bar{c}]$ is a measurable running cost function with $0<\bar{c}<\infty$.
	 \item $U:[0,\infty)\to \mathbb{R}$ denotes a utility function, which is assumed to be continuous and strictly increasing. 
\end{itemize}

Now we describe the evolution of the controlled semi-Markov process.  At time $0$, which is the initial decision epoch, the system is in state $i_{0}$. Depending upon the state of the system the controller chooses an action $a_0\in A(i_{0})$. As a consequence of this choice of action the system remains at $i_{0}$ until time $t_1$. At time $t_{1}$ the system jumps to the next state $i_{1}$ according to the transition law $Q(dt_1,i_1\vert i_0,a_0)$. A discounted cost equal to $\int_{0}^{t_1}e^{-\alpha u}C(i_0,a_0)du$ is generated. Now in state $i_{1}$, depending on the current state, previous state, sojourn time, and previously selected action the controller chooses an action $a_{1}\in A(i_{1})$ and the same sequence of events repeat. Based on this evolution and we obtain a history $h_{n}=(i_{0},a_{0},t_1,i_{1},a_{1},t_{2},.....,i_{n-1},a_{n-1},t_{n},i_{n})$ up to the nth jump of the described process. Here $t_{k}$ denotes the $kth$ jump time with the assumption $t_{0}=0$, $i_k$ is the state after the $kth$ jump and $a_k$ is the action chosen at the $kth$ jump time. Let $H_n$ denote the set of all possible histories upto the $nth$ jump time. $H_n$ is endowed with a Borel $\sigma-$field.\\
Next, we describe the policies which govern the choice of action by the decision-maker.
\begin{definition}
	A history dependent policy $\pi:=\{\pi_{n},n\geq 0\}$ is sequence of measurable functions $\pi_n:H_n\to A$ such that $\pi_n(h_n)\in A(i_n)$. A history dependent policy $\pi$ is said to be Markov if there exists a sequence $\{f_n\}$ of measurable functions $f_n:[0,\infty)\times E\to A$, such that $f_n(t,i)\in A(i)$ and $\pi_n(h_n)=f_n(t_n,i_n)$. In this case we write $\pi=\{f_n\}_{n\geq 0}$. If $f_n=f$ for some common function $f$ for all $n$, then the Markov policy is said to be stationary. We will sometimes denote a stationary policy by the common function $f$. We denote by $\Pi$, $\Pi^{M}$, $\Pi^{S}$ the set of all history dependent, Markov and stationary policies respectively.
\end{definition}
For each $i\in E$ and $\pi\in \Pi$ by the well-known Tulcea's Theorem( \cite{Lasserre96}, Proposition C.10), there exist a unique probability space $(\Omega,\mathcal{F},\mathbb{P}^{\pi}_{i})$ and stochastic processes $\{T_{n},X_{n},A_{n}\}_{n\geq 0}$ such that, for each $t\in [0,\infty)$, $j\in E$, $C\in \mathcal{A}$ and $n\geq 0$,
\begin{align*}
&\mathbb{P}^{\pi}_{i}(T_{0}=0,X_{0}=i)= 1,\\
&\mathbb{P}^{\pi}_{i}(A_{n}\in C\vert h_{n})= \delta_{\pi_{n}(h_n)}(C),\\
&\mathbb{P}^{\pi}_{i}(T_{n+1}-T_{n}\leq t, X_{n+1}=j\vert h_{n},a_{n},b_{n})= Q(t,j\vert x_{n},a_{n},b_{n}),
\end{align*} 
where $T_{n}$, $X_{n}$ and $A_{n}$ denote the $nth$ jump time, the state and the action chosen by the decision maker at the nth jump time and $\delta$ denotes the Dirac measure. The expectation operator with respect to $\mathbb{P}^{\pi}_{i}$ is denoted by $\mathbb{E}^{\pi}_{i}$. In order to avoid the possibility of an infinite 
number of jumps within a finite time interval we make the following standard assumption.
\begin{assumption}
	 There exist constants $\delta> 0$ and $\epsilon> 0$ such that
		\begin{align}\label{finite jump}
		\sup_{(i,a)\in K} Q(\delta,E\vert  i, a) \leq  1 - \epsilon.
		\end{align}
\end{assumption}	

\begin{remark}
Assumption 1 means that the sojourn time at any state and under any action exceeds $\delta$ with a probability at least $\epsilon$. If $T_{\infty}=\lim_{n\to \infty}T_n$, then it is well known that (see Proposition 2.1 of \cite{Song11}), if \eqref{finite jump} holds then $\mathbb{P}^\pi_i(T_{\infty}=\infty)=1$ for any $i\in E$ and $\pi\in \Pi$.
\end{remark} 
We also define the continuous time processes $\{X(t), A(t), t\in [0,\infty)\}$ by
$$X(t)=X_{n},\quad A(t)=A_{n}, \quad \mbox{for}\quad T_{n}\leq t < T_{n+1},\,\,t\in [0,\infty)\,\, \mbox{and}\,\,n\geq 0\,.$$
Now we describe the cost criteria. Let $\alpha>0$ be a discount factor. We consider SMDPs over both finite(random) and infinite horizons. For $N\geq 1$, the total discounted cost accumulated upto the $Nth$ jump time is given by
\begin{align*}
C_{N}=\int_{0}^{T_N}e^{-\alpha u}C(X_{u},A_{u})du=\sum_{n=1}^N e^{-\alpha T_{n-1}}\int_{0}^{T_n-T_{n-1}}e^{-\alpha t}C(X_n,A_n)dt,   
\end{align*}
and the total discounted cost accumulated over the infinite time horizon is given by
\begin{align*}
C_{\infty}=\int_{0}^{\infty}e^{-\alpha u}C(X_{u},A_{u})du=\sum_{n=1}^{\infty} e^{-\alpha T_{n-1}}\int_{0}^{T_n-T_{n-1}}e^{-\alpha t}C(X_n,A_n)dt.   
\end{align*}
Instead of the standard expectation minimization, here we consider the following minimization problems:
\begin{align*}
&\inf_{\pi\in \Pi} \mathbb{E}^{\pi}_{i}\big[U(C_{N})\big], \hspace{.2cm}i\in E \hspace{.2cm} \mbox{and}\\
&\inf_{\pi\in \Pi}\mathbb{E}^{\pi}_{i}\big[U(C_{\alpha}^{\infty})\big], \hspace{.2cm}i\in E.
\end{align*}
For $i\in E$ and $\pi \in \Pi$, let $$J_N^\pi(i)=\mathbb{E}^{\pi}_{i}\big[U(C_{N})\big]\quad \mbox{and}\quad J_{\infty}^\pi(i)=\mathbb{E}^{\pi}_{i}\big[U(C_{\infty})\big].$$ Also let $$J_N(i)=\inf_{\pi\in \Pi} J_N^\pi(i)\quad \mbox{and}\quad J_{\infty}(i)=\inf_{\pi\in \Pi}J_{\infty}^\pi(i).$$ A policy $\pi^* \in \Pi$ is said to be optimal for the finite horizon problem if $J_N^{\pi^*}(i)=J_N(i)$ for all $i\in E$. Similarly, a policy $\pi^*$ is said to be optimal for the infinite horizon problem if $J_{\infty}^{\pi^*}(i)=J_{\infty}(i)$ for all $i$ in $E$. We wish to characterise $J_N(\cdot)$ and $J_{\infty}(\cdot)$ and find optimal policies for both finite and infinite horizon problems.

\section{Finite Horizon Problem}
We first consider the optimization problem upto the $Nth$ jump time. We will use a state augmentation technique to convert the original problem to a standard risk-neutral problem. Similar state augmentation technique has been used in the context of discrete time MDP in \cite{Rieder14} and in the context of SMDP in \cite{Lian18}. We augment the state process to include the accumulated discounted cost. More precisely, we consider the augmented controlled state process $\{T_n,X_n,C_n,n\geq 0\}$ where $T_n$ and $X_n$ are as before and $C_n$ is the accumulated discounted cost upto the $nth$ jump time. For $i\in E$, $(t,\lambda)\in [0,\infty)\times [0,\infty)$ and $a\in A(i)$, the controlled transition law of the augmented state process is given by 
$$\hat{Q}(B\times \{j\}\times C|t,i,\lambda,a)=\int_{0}^{\infty}1_{B}(t+s)\delta_{\lambda+\frac{C(i,a)}{\alpha}(e^{-\alpha t}-e^{-\alpha(t+s)})}(C)Q(ds,j|i,a),$$ where $1$ is the indicator function, $B$ and $C$ are Borel subsets of $[0,\infty)$, $j\in E$. In this augmented set-up we redefine the various policy sets. But for economy of notation, we use the same notations for the augmented policy sets. Thus, in particular, in the augmented set-up a Markov policy is given by $\pi=\{f_n\}$ where $f_n:[0,\infty)\times E\times [0,\infty)\to A$ are measurable functions such that $f_n(t,i,\lambda)\in A(i)$ for all $(t,i,\lambda)$. 

Suppose $\mathbb{E}_{(t,i,\lambda)}$ is the expectation operator corresponding to the initial condition\\ $T_0=t, X_0=i, C_0=\lambda$. Then for $n=0,1,\ldots,N$ and $\pi \in \Pi$ define the value functions,

\begin{align}\label{augmentation}
V_{n \pi}(t,i,\lambda)&=\mathbb{E}^{\pi}_{(t,i,\lambda)}\big[U\big(e^{-\alpha t}\int_{0}^{T_n}e^{-\alpha u}C(X_{u},A_{u})du+\lambda\big)\big]=\mathbb{E}^{\pi}_{(t,i,\lambda)}[U(C_n)],\nonumber\\ & (t,i,\lambda)\in [0,\infty)\times E\times [0,\infty),\nonumber\\
 V_{n}(t,i,\lambda)&=\inf_{\pi\in \Pi}V_{n \pi}(t,i,\lambda),\quad (t,i,\lambda)\in [0,\infty)\times E\times [0,\infty). 
\end{align}
Thus $J_N(i)=V_N(0,i,0)$. The state augmentation now allows us to think of the optimization problem as a finite horizon discrete time Markov decision process with state process $\{T_n,X_n,C_n,n\geq 0\}$, zero one stage cost and terminal cost function $g(t,i,\lambda)=U(\lambda)$. Now define the set
\begin{align}\label{set}
B([0,\infty)\times E\times [0,\infty))= \{&v:[0,\infty)\times E\times [0,\infty)\rightarrow [0,\infty)\,\,\mbox{is measurable and}\nonumber\\& U(\lambda)\leq v(t,i,\lambda)\leq U(e^{-\alpha t}\frac{\bar{c}}{\alpha}+\lambda)\,\,\forall (t,i,\lambda)\}.
\end{align} Let $F$ denote the set of all measurable functions $f:[0,\infty)\times E\times [0,\infty)\to A$ such that $f_n(t,i,\lambda)\in A(i)$ for all $(t,i,\lambda)$. Then for $v\in B([0,\infty)\times E\times [0,\infty))$ and $f\in \mathbb{F}$, we define the operators
\begin{align}\label{operator1}
T_{f}v(t,i,\lambda):=\sum_{j}\int v\big(t+s,j,\lambda+\frac{C(i,f(t,i,\lambda))}{\alpha}(e^{-\alpha t}-e^{-\alpha(t+s)})\big)Q(ds,j\vert i,f(t,i,\lambda))
\end{align}
and
\begin{align}\label{minimizing operator}
(Tv)(t,i,\lambda)
:=\inf_{a\in A(i)}\sum_{j}\int v\big(t+s,j,\lambda+\frac{C(i,a)}{\alpha}(e^{-\alpha t}-e^{-\alpha(t+s)})\big)Q(ds,j\vert i,a).
\end{align}
We say that $f\in F$ is a minimizer of $v$ if $Tv=T_{f}v$. Observe that the operators $T_{f}v$ and $T$ are both monotone. We need to impose the following assumption.  
\begin{assumption} For each $(t,i,\lambda) \in [0,\infty)\times E\times [0,\infty)$,\\
	(i) $A(i)$ is compact.\\
	(ii) $c(i,a)$ is continuous on $A(i)$.\\
	(iii) $\sum_{j}\int v\big(t+s,j,\lambda+\frac{C(i,a)}{\alpha}(e^{-\alpha t}-e^{-\alpha(t+s)})\big)Q(ds,j\vert i,a)$ is continuous on $A(i)$, for each $v\in B([0,\infty)\times E\times [0,\infty))$.
\end{assumption}
We have the following:
\begin{theorem}\label{finite}
Suppose $Assumptions$ 1 and 2 hold.	For $n=1,....,N$, we have the following.
	\begin{itemize}
		\item[(a)] For any policy $\pi=(f_{0},f_{1},....)\in \Pi^{M}$, we have the iteration: $V_{n \pi}=T_{f_{0}} T_{f_{1}}....T_{f_{n-1}}U$, where $T_{f_i}$ is as in \eqref{operator1}.
		\item[(b)] $V_{0}(t,i,\lambda)=U(\lambda)$ and $V_{n}=TV_{n-1}$ i.e.,\\
		\begin{align}\label{Bellman eqn}
		V_{n}(i,\lambda,t)=\inf_{a\in A(i)} \sum_{j}\int V_{n-1}\big(t+s,j,\lambda+\frac{C(i,a)}{\alpha}(e^{-\alpha t}-e^{-\alpha(t+s)})\big)Q(ds,j\vert i,a).
		\end{align}
		Also, $V_{n}\in B([0,\infty)\times E\times [0,\infty))$ for all $n=0,1,\ldots, N$.
		\item[(c)] For every $n=1,2,...,N$ there exists a minimizer $f^{*}_{n}\in F$ of $V_{n-1}$ and $(f_N^*,f_{N-1}^*,\ldots,f_1^*)$ is an optimal Markov policy for the finite horizon optimization problem.		
	\end{itemize}
\end{theorem}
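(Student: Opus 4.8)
The plan is to read $\{T_n,X_n,C_n\}$ as a (time-inhomogeneous but Markov) controlled chain in which the current time $T_n$ is carried as a state coordinate, so that the finite-horizon dynamic-programming machinery applies with terminal cost $g(t,i,\lambda)=U(\lambda)$ and zero running cost. I would establish (a) first, then prove (b) and (c) together by a single induction on $n$. For (a) I would induct on $n$: the base case $n=1$ is a direct computation from the definition of $C_1$ and the kernel $\hat Q$, giving $V_{1\pi}=T_{f_0}U$. For the inductive step the essential fact is the additive decomposition $C_n=C_1+(\text{discounted cost accrued between the first and the }n\text{-th jump})$, which is exactly what the augmentation encodes. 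Conditioning on the first jump $(T_1,X_1,C_1)$ and using the Markov property of the Tulcea construction, the residual $(n-1)$-step problem started from $(T_1,X_1,C_1)$ is driven by the shifted Markov policy $(f_1,f_2,\ldots)$, whose value is $V_{(n-1),(f_1,f_2,\ldots)}=T_{f_1}\cdots T_{f_{n-1}}U$ by the induction hypothesis; taking the outer expectation reinstates $T_{f_0}$, yielding $V_{n\pi}=T_{f_0}T_{f_1}\cdots T_{f_{n-1}}U$.

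For (b)--(c) I would carry the three statements ``$V_n=TV_{n-1}$, $V_n\in B([0,\infty)\times E\times[0,\infty))$, and $(f_n^*,\ldots,f_1^*)$ is optimal for the $n$-step problem'' through one induction. The base case $V_0(t,i,\lambda)=U(\lambda)$ is immediate because $C_0=\lambda$. Assuming $V_{n-1}\in B$ is the optimal $(n-1)$-step value, I would first verify $TV_{n-1}\in B$. Writing $\lambda':=\lambda+\frac{C(i,a)}{\alpha}(e^{-\alpha t}-e^{-\alpha(t+s)})$, the lower bound $TV_{n-1}\ge U(\lambda)$ follows from $\lambda'\ge\lambda$ (nonnegativity of $C$), monotonicity of $U$, and $\sum_j\int Q(ds,j\vert i,a)=1$; the upper bound uses $C(i,a)\le\bar c$, which gives $\lambda'+e^{-\alpha(t+s)}\frac{\bar c}{\alpha}\le\lambda+e^{-\alpha t}\frac{\bar c}{\alpha}$, hence $TV_{n-1}(t,i,\lambda)\le U(e^{-\alpha t}\frac{\bar c}{\alpha}+\lambda)$. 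Next I would invoke Assumption 2: for fixed $(t,i,\lambda)$ the map $a\mapsto\sum_j\int V_{n-1}(t+s,j,\lambda')\,Q(ds,j\vert i,a)$ is continuous on the compact set $A(i)$, so a pointwise minimizer exists, and a measurable-selection argument produces $f_n^*\in F$ with $T_{f_n^*}V_{n-1}=TV_{n-1}$, which is the existence claim in (c).

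To close I would prove $V_n=TV_{n-1}$ by two inequalities. For $V_n\le TV_{n-1}$, apply part (a) to $(f_n^*,\ldots,f_1^*)$: by the induction hypothesis $T_{f_{n-1}^*}\cdots T_{f_1^*}U=V_{n-1}$, so this policy attains $T_{f_n^*}V_{n-1}=TV_{n-1}$. For $V_n\ge TV_{n-1}$, take an arbitrary $\pi\in\Pi$, condition on the first jump, and apply the principle of optimality: the conditional law given $h_1$ corresponds to a residual policy whose $(n-1)$-step value is at least $V_{n-1}$, so $V_{n\pi}(t,i,\lambda)\ge\sum_j\int V_{n-1}(t+s,j,\lambda')\,Q(ds,j\vert i,\pi_0(t,i,\lambda))\ge TV_{n-1}(t,i,\lambda)$, and taking the infimum over $\pi$ finishes it. Combining the two gives $V_n=TV_{n-1}\in B$ with minimizer $f_n^*$ and optimal policy $(f_n^*,\ldots,f_1^*)$, closing the induction and specializing to $(f_N^*,\ldots,f_1^*)$.

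I expect the main obstacle to be technical rather than conceptual and to lie in two places. First, producing $f_n^*$ as a genuinely \emph{measurable} function of $(t,i,\lambda)$: pointwise existence of a minimizer is free from compactness and continuity in Assumption 2, but the measurable selection, together with the verification that $T$ preserves measurability, requires a Berge/measurable-selection argument that must be carried out carefully because the state space is noncompact in the $(t,\lambda)$ coordinates. Second, making the conditioning step rigorous for \emph{history-dependent} policies, namely writing $\mathbb{E}^\pi_{(t,i,\lambda)}[U(C_n)\vert h_1]$ as the $(n-1)$-step value of a well-defined residual policy and bounding it below by $V_{n-1}$; this is the only point where one leaves the Markov class, and it is what upgrades the optimality of $(f_N^*,\ldots,f_1^*)$ from ``best Markov'' to ``best over all of $\Pi$.''
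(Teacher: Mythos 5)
Your proposal is correct and follows essentially the same route as the paper: your proof of (a) is the paper's induction on $n$, conditioning on the first jump and applying the induction hypothesis to the shifted Markov policy $(f_1,f_2,\ldots)$ to reassemble $T_{f_0}T_{f_1}\cdots T_{f_{n-1}}U$. For (b) and (c) the paper gives no argument at all, simply invoking Assumption 2 and ``standard theory of discrete time MDP'' (Chapter 3 of Hern\'andez-Lerma and Lasserre); your backward induction, the two-inequality verification of $V_n=TV_{n-1}$, the check that $T$ preserves the bounds defining $B([0,\infty)\times E\times[0,\infty))$, and the compactness-plus-measurable-selection construction of $f_n^*$ are exactly the standard argument that citation stands for, so you have made explicit what the paper delegates to the reference.
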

\begin{proof} We prove (a) by induction on $n$.\\
Firstly, $V_{0 \pi}(t,i,\lambda)=U(\lambda)$. Now for $n=1$ we have,
\begin{align*}
V_{1 \pi}(t,i,\lambda)=&\mathbb{E}^{\pi}_{(t,i,\lambda)}\big[\big(\lambda+e^{-\alpha t}\int_{0}^{T_1}e^{-\alpha u}C(i,f_{0}(t,i,\lambda))du\big)\big]\\
=&(T_{f_{0}}U)(t,i,\lambda).
\end{align*}
Now suppose that the statement holds for $V_{n-1  \pi}$. So we consider $V_{n \pi}$:

\begin{align*}
(T_{f_{0}}....T_{f_{n-1}}U)(t,i,\lambda)&=T_{f_{0}}(T_{f_{1}}....T_{f_{n-1}}U)(t,i,\lambda)\\&=\sum_{j}\int V_{n-1 \bar{\pi}}\big(t+s,j,\lambda+\frac{C(i,f_{0}(t,i,\lambda))}{\alpha}(e^{-\alpha t}-e^{-\alpha(t+s)})\big)Q(ds,j\vert i,f_0(t,i,\lambda))\\
&=\sum_{j}\int \mathbb{E}^{\bar{\pi}}_{(t+s,j,\lambda^{\prime})}\big[U\big(\lambda+\frac{C(i,f_{0}(t,i,\lambda))}{\alpha}(e^{-\alpha t}-e^{-\alpha(t+s)})\\&+e^{-\alpha (t+s)}\int_{0}^{T_{n-1}}e^{-\alpha u}C(X_{u},a_{u})du\big)\big]Q(ds,j\vert i,f_0(t,i,\lambda))\\
&=\mathbb{E}^{\pi}_{(t,i,\lambda)}\big[U\big(e^{-\alpha t}\int_{0}^{T_{n}}e^{-\alpha u}C(X_{u},A_{u})du+\lambda \big)\big
]\\
&=V_{n \pi}(i,\lambda,t),
\end{align*}
where $\bar{\pi}$ is the one shifted policy and $\lambda^{\prime}=\lambda+\frac{C(i,f_{0}(t,i,\lambda))}{\alpha}(e^{-\alpha t}-e^{-\alpha(t+s)})$. Hence we got our desired result for $V_{n \pi}$. Then by induction argument we showed (a).\\

(b) and (c): The proof of $(b)$ and $(c)$ follows by Assumption 2 and standard theory of discrete time MDP, see Chapter 3 of \cite{Lasserre96}.
\end{proof}
\begin{corollary}\label{cor1}
	In the case of $U(\lambda)=(\frac{1}{\gamma})e^{\gamma \lambda}$ with $\gamma \neq 0$, we have $V_{n}(i,\lambda,t)=e^{\gamma \lambda}h_{n}(t,i)$ and $J_{N}(i)=h_{N}(0,i)$. And $h_{n}$ satisfies the iteration given by $h_{0}(i,\lambda)=\frac{1}{\gamma}$	and 
	\begin{align*}
	h_{n}(t,i)=\inf_{a\in A(i)}\sum_{j}\int e^{\gamma \frac{C(i,a)}{\alpha}(e^{-\alpha t}-e^{-\alpha (t+s)})} h_{n-1}(t+s,j)Q(ds,j\vert i,a).
	\end{align*}
\end{corollary}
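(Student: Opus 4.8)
The plan is to establish the multiplicative factorization $V_n(t,i,\lambda)=e^{\gamma\lambda}h_n(t,i)$ by induction on $n$, taking as given the dynamic programming recursion $V_0=U$, $V_n=TV_{n-1}$ from Theorem \ref{finite}(b); the entire corollary then drops out by specializing that abstract recursion to the exponential utility, for which the accumulated cost $\lambda$ enters only through a positive multiplicative factor. For the base case $n=0$ I would simply observe that $V_0(t,i,\lambda)=U(\lambda)=\tfrac{1}{\gamma}e^{\gamma\lambda}=e^{\gamma\lambda}h_0(t,i)$ with $h_0\equiv \tfrac{1}{\gamma}$ constant in $(t,i)$, which identifies the initialization of the $h_n$-iteration.

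For the inductive step I would assume $V_{n-1}(t,i,\lambda)=e^{\gamma\lambda}h_{n-1}(t,i)$ and feed it into $V_n=TV_{n-1}$ from \eqref{minimizing operator}. The point to exploit is that the cost argument passed to $V_{n-1}$ is $\lambda+\tfrac{C(i,a)}{\alpha}(e^{-\alpha t}-e^{-\alpha(t+s)})$, so additivity of the exponent splits off the $\lambda$-dependence:
\begin{align*}
V_n(t,i,\lambda) &= \inf_{a\in A(i)}\sum_j\int e^{\gamma(\lambda + \frac{C(i,a)}{\alpha}(e^{-\alpha t}-e^{-\alpha(t+s)}))}h_{n-1}(t+s,j)\,Q(ds,j|i,a)\\
&= e^{\gamma\lambda}\inf_{a\in A(i)}\sum_j\int e^{\gamma\frac{C(i,a)}{\alpha}(e^{-\alpha t}-e^{-\alpha(t+s)})}h_{n-1}(t+s,j)\,Q(ds,j|i,a).
\end{align*}
Declaring $h_n(t,i)$ to be the infimum on the last line delivers simultaneously the factorization $V_n=e^{\gamma\lambda}h_n$ and the asserted iteration for $h_n$. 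Since the extracted factor $e^{\gamma\lambda}$ does not depend on $a$, the minimizer of $V_{n-1}$ supplied by Theorem \ref{finite}(c) is exactly the minimizing action in the $h_n$-equation, so well-definedness of the infimum is inherited and one even sees that the optimal action depends only on $(t,i)$ and not on the accrued cost $\lambda$.

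The one step that genuinely needs justification — and the only place the sign of $\gamma$ could cause worry — is pulling $e^{\gamma\lambda}$ out through the integral, the sum, and the infimum over $a$. This is legitimate precisely because $e^{\gamma\lambda}>0$ for every real $\gamma$ and $\lambda$, and a strictly positive constant commutes with $\inf$; hence the factorization is valid uniformly in the risk-averse case $\gamma>0$ and the risk-seeking case $\gamma<0$, with no change of argument. Finally, evaluating at the initial condition $(t,\lambda)=(0,0)$ gives $J_N(i)=V_N(0,i,0)=e^{0}h_N(0,i)=h_N(0,i)$, as claimed. I do not expect a real obstacle here: once the factorization is in place the corollary is an immediate reading of Theorem \ref{finite}, and the sole substantive check is the positivity observation just noted.
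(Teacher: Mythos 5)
Your proposal is correct and follows essentially the same route as the paper's own proof: induction on $n$ with base case $h_0\equiv\frac{1}{\gamma}$, substitution of the inductive hypothesis into the Bellman recursion $V_n=TV_{n-1}$ from Theorem \ref{finite}(b), and extraction of the positive factor $e^{\gamma\lambda}$ through the integral, sum, and infimum. Your explicit justification that a strictly positive constant commutes with the infimum (valid for either sign of $\gamma$) is a point the paper leaves implicit, but it is the same argument.
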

\begin{proof} We will prove by induction on $n$. For $n=0$ we have $V_{0}(t,i,\lambda)=(\frac{1}{\gamma})e^{\gamma \lambda}=e^{\gamma \lambda}(\frac{1}{\gamma})=e^{\gamma \lambda}h_{0}$. Hence the statement is true for $n=0$. Now suppose it is true for $n-1$. From \eqref{Bellman eqn} we get,
\begin{align*}
V_{n}(t,i,\lambda)&=\inf_{a\in A(i)}\sum_{j}\int V_{n-1}\big(t+s,j,\lambda+\frac{C(i,a)}{\alpha}(e^{-\alpha t}-e^{-\alpha(t+s)})\big)Q(ds,j\vert i,a)\\
&= \inf_{a\in A(i)}\sum_{j}\int e^{\gamma (\lambda+\frac{C(i,a)}{\alpha}(e^{-\alpha t}-e^{-\alpha(t+s)}))}h_{n-1}(t+s,j)Q(ds,j\vert i,a)\\
&=e^{\gamma \lambda}\inf_{a\in A(i)}\sum_{j}\int e^{\gamma \frac{C(i,a)}{\alpha}(e^{-\alpha t}-e^{-\alpha (t+s)})} h_{n-1}(t+s,j)Q(ds,j\vert i,a)
\end{align*}
Hence, the statement follows by setting $$h_{n}(t,i)=\inf_{a\in A(i)}\sum_{j}\int e^{\gamma \frac{C(i,a)}{\alpha}(e^{-\alpha t}-e^{-\alpha (t+s)})} h_{n-1}(t+s,j)Q(ds,j\vert i,a).$$
\end{proof}

\section{Infinite Horizon Problem}

Now to consider the infinite horizon problem. Like in the finite horizon case we again consider the augmented set-up and define the following value functions.
{\small\begin{align*}\label{augmentation in infinite case}
&V_{\infty \pi}(t,i,\lambda):=\mathbb{E}^{\pi}_{(t,i,\lambda)}\big[U\big(e^{-\alpha t}\int_{0}^{\infty}e^{-\alpha u}C(X_{u},A_{u})du+\lambda\big)\big],\,\,\pi\in \Pi,\,(t,i,\lambda)\in [0,\infty)\times E\times [0,\infty).
\end{align*}}
\begin{align*}
V_{\infty}(t,i,\lambda)=\inf_{\pi\in \Pi}V_{\infty \pi}(t,i,\lambda),\,\,(t,i,\lambda)\in [0,\infty)\times E\times [0,\infty).
\end{align*}
For a stationary policy $\pi=\{f\}$, we will will write $V_{\infty \pi}$ as $V_f$. It is easy to see that, $J_{\infty}(i)=V_{\infty}(0,i,0)$ for all $i\in E$. For the infinite horizon problem, we will deal with convex and concave utility functions separately. First we analyse the concave case.
\subsection{Concave Utility Function.} let $U:\mathbb{R_{+}} \rightarrow \mathbb{R}$ be a concave utility function. We introduce one more notation $\bar{v}(t,\lambda)= U(e^{-\alpha t}\frac{\bar{c}}{\alpha}+\lambda)$. It is straight forward to see that $U(\lambda) \leq V_{\infty}(i,\lambda, t)\leq  \bar{v}(t,\lambda)$. Thus $V_{\infty}\in B([0,\infty)\times E\times [0,\infty))$ where $B([0,\infty)\times E\times [0,\infty))$ is given by \eqref{set}.

\begin{theorem}\label{concave} Suppose that Assumptions 1 and 2 hold. Also suppose that the derivative $U^{\prime}(0)$ exists. Then the following statements hold.
	\begin{itemize}
		\item[(a)] $V_{\infty}$ is the unique solution of $v=Tv$ in $B([0,\infty)\times E\times [0,\infty))$ for $T$ defined in (\ref{minimizing operator}).
		Moreover, $T^{n}U \uparrow V_{\infty}$ and $T^{n}\bar{v} \downarrow V_{\infty}$ as $n\rightarrow \infty$.
		\item[(b)]  There exists a minimizer $f^{*}\in F$ of $V_{\infty}$ and the stationary policy determined by $f^*$
		is an optimal policy for the infinite horizon problem.
	\end{itemize}
\end{theorem}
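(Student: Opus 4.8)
The plan is to exploit concavity together with the hypothesis that $U'(0)$ is finite, which I will use through the elementary fact that a concave increasing $U$ with finite right derivative $L:=U'(0)$ is globally Lipschitz: $0\le U(\lambda_2)-U(\lambda_1)\le L(\lambda_2-\lambda_1)$ for $0\le\lambda_1\le\lambda_2$. First I would record the two monotone sequences. Since $C\ge 0$ and $U$ is increasing one checks $U\le TU$, and since $C\le\bar c$ together with $\sum_j Q(\infty,j\vert i,a)=1$ one checks $T\bar v\le\bar v$; by monotonicity of $T$ this yields $T^nU\uparrow\underline V$ and $T^n\bar v\downarrow\overline V$ for some limits $\underline V,\overline V\in B([0,\infty)\times E\times[0,\infty))$ with $\underline V\le\overline V$. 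By Theorem \ref{finite}(b) the lower iterates are exactly the finite horizon values, $T^nU=V_n$, so $\underline V=\lim_n V_n$.

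The main step is a two sided geometric squeeze driven by Assumption 1. Set $\beta:=\sup_{(i,a)\in K}\int_0^\infty e^{-\alpha s}Q(ds,E\vert i,a)$. Splitting the integral at $\delta$ and using \eqref{finite jump} I would show $\beta\le (1-\epsilon)+\epsilon e^{-\alpha\delta}=1-\epsilon(1-e^{-\alpha\delta})<1$. Next, for $w\ge u$ in $B$ whose gap is bounded by a function of $t$ alone, the inequality $Tw-Tu\le\sup_{a}\sum_j\int (w-u)(t+s,\cdot,\cdot)\,Q(ds,j\vert i,a)$ (valid since $Tw$ and $Tu$ are infima over the same action dependent kernels, with identical $\lambda$ updates) propagates such bounds. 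Starting from $\bar v(t,\lambda)-U(\lambda)=U(e^{-\alpha t}\tfrac{\bar c}{\alpha}+\lambda)-U(\lambda)\le L\tfrac{\bar c}{\alpha}e^{-\alpha t}$ and iterating, an induction gives $T^n\bar v(t,i,\lambda)-T^nU(t,i,\lambda)\le L\tfrac{\bar c}{\alpha}e^{-\alpha t}\beta^n$. Since $\overline V-\underline V$ is dominated by this gap for every $n$, letting $n\to\infty$ forces $\underline V=\overline V=:V^*$. Applying $T$ to the sandwich $T^nU\le V^*\le T^n\bar v$ and letting $n\to\infty$ gives $V^*\le TV^*\le V^*$, so $V^*=TV^*$; the same sandwich shows any $v\in B$ with $v=Tv$ satisfies $T^nU\le v\le T^n\bar v$, hence $v=V^*$, which gives uniqueness.

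It remains to identify $V^*$ with $V_\infty$ and to produce the optimal policy, which I would do simultaneously. Because $C_n\uparrow C_\infty$ and $U$ is increasing, $V_n\le V_\infty$ for every $n$, whence $V^*=\lim_n V_n\le V_\infty$. For the reverse inequality, Assumption 2 (compactness of $A(i)$ and the continuity in (iii)) applied to $V^*\in B$ yields, via a standard measurable selection theorem, a minimizer $f^*\in F$ with $T_{f^*}V^*=TV^*=V^*$. For the stationary policy $f^*$, Theorem \ref{finite}(a) gives $V_{n f^*}=T_{f^*}^nU$, and since $U\le V^*$ and $T_{f^*}^nV^*=V^*$, monotonicity of $T_{f^*}$ yields $V_{n f^*}\le V^*$. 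Monotone convergence (using $U(C_n)\uparrow U(C_\infty)$ and the bound $U(C_\infty)\le\bar v<\infty$) gives $V_{f^*}=\lim_n V_{n f^*}\le V^*$. Hence $V_\infty\le V_{f^*}\le V^*\le V_\infty$, so $V_\infty=V^*=V_{f^*}$; in particular $V_\infty$ solves $v=Tv$, completing (a), and $f^*$ attains the infimum, so the stationary policy $f^*$ is optimal, giving (b).

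I expect the main obstacle to be the geometric gap estimate, since it is the only place where the nonlinearity of $U$ interacts with the dynamics; handling it cleanly requires both the global Lipschitz bound coming from finiteness of $U'(0)$ and the quantitative no explosion bound $\beta<1$ extracted from Assumption 1. Once this squeeze is in place, the fixed point equation, uniqueness, and optimality all follow from monotonicity and the finite horizon iteration of Theorem \ref{finite}.
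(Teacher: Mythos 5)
Your proposal is correct, but it reaches the conclusion by a genuinely different route than the paper. The paper works pathwise and probabilistically: it applies the concavity inequality $U(\lambda_1+\lambda_2)\leq U(\lambda_1)+U'_-(\lambda_1)\lambda_2$ inside the expectations defining $V_{n\pi}$ and $V_{\infty\pi}$, combines it with the bound $\mathbb{E}^{\pi}_{(t,i,\lambda)}[e^{-\alpha T_n}]\leq (1-\epsilon+\epsilon e^{-\alpha\delta})^n$ (proved by induction from Assumption 1), and thereby obtains $V_n\leq V_\infty\leq V_n+\epsilon_n(t,\lambda)$ with $\epsilon_n(t,\lambda)=U'_-(\lambda)e^{-\alpha t}\tfrac{\bar c}{\alpha}(1-\epsilon+\epsilon e^{-\alpha\delta})^n$; thus the identification $T^nU=V_n\uparrow V_\infty$ comes first, and the fixed-point property, the decrease of $T^n\bar v$ (via its representation as an infimum over Markov policies of a perturbed cost), and uniqueness are derived afterwards. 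You instead run a purely operator-theoretic quasi-contraction argument: you extract the same geometric constant analytically, $\beta:=\sup_{(i,a)}\int_0^\infty e^{-\alpha s}Q(ds,E\vert i,a)\leq 1-\epsilon(1-e^{-\alpha\delta})<1$, use the global Lipschitz constant $L=U'(0)$ (valid since a concave increasing $U$ with finite $U'(0)$ satisfies $U(\lambda_2)-U(\lambda_1)\leq L(\lambda_2-\lambda_1)$) to propagate the $t$-only gap bound $T^n\bar v-T^nU\leq L\tfrac{\bar c}{\alpha}e^{-\alpha t}\beta^n$ through the sup-inf inequality, and get existence and uniqueness of the fixed point $V^*$ before identifying $V^*=V_\infty$ by the two policy arguments ($V_n\leq V_\infty$ on one side, $V_{f^*}\leq V^*$ via $T^n_{f^*}U\leq T^n_{f^*}V^*=V^*$ and monotone convergence on the other). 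What your route buys is a clean separation of the analytic fixed-point theory from the probabilistic model: you avoid the paper's slightly delicate step of applying $T$ to the inequality $V_n+\epsilon_n\geq V_\infty$ and its use of the Markov-policy representation of $T^n\bar v$; the cost is a coarser error bound (global constant $U'(0)$ rather than the paper's $\lambda$-dependent $U'_-(\lambda)$), which is immaterial for the conclusion. Part (b) is essentially the same in both treatments (measurable selection plus iteration of $T_{f^*}$ and a convergence theorem), and your verification that $\beta<1$ is exactly the analytic counterpart of the paper's induction bound on $\mathbb{E}[e^{-\alpha T_n}]$.
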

\begin{proof} (a) Here we first show that $V_{n}=T^{n}U \uparrow V_{\infty}$ as $n\rightarrow \infty$. It is known that for $U:\mathbb{R_{+}}\rightarrow \mathbb{R}$ increasing and concave we have the inequality
$$U(\lambda_{1}+\lambda_{2})\leq U(\lambda_{1})+U^{\prime}_{-}(\lambda_{1})\lambda_{2}, \quad \lambda_{1},\lambda_{2}\geq 0,$$
where $U^{\prime}_{-}$ is the left-hand side derivative of $U$ that exists since U is concave. Moreover, $U^{\prime}_{-}(\lambda)\geq 0$ and is decreasing. For $(t,i,\lambda)\in [0,\infty)\times E\times [0,\infty)$ and $\pi \in \Pi$ we have,
\begin{align*}
V_{n}(t,i,\lambda)&\leq V_{n \pi}(t,i,\lambda)\leq V_{\infty \pi}(t,i,\lambda)=\mathbb{E}^{\pi}_{(t,i,\lambda)}\big[U\big(e^{-\alpha t}\int_{0}^{\infty}e^{-\alpha u}C(X_{u},A_{u})du)+\lambda\big)\big]\\
&=\mathbb{E}^{\pi}_{(t,i,\lambda)}\big[U\big(e^{-\alpha t}\int_{0}^{T_{n}}e^{-\alpha u}C(X_{u},A_{u})du+\lambda+e^{-\alpha t}\int_{T_{n}}^{\infty}e^{-\alpha u}C(X_{u},A_{u})du\big)\big]\\
&\leq \mathbb{E}^{\pi}_{(t,i,\lambda)}\big[U\big(e^{-\alpha t}\int_{0}^{T_{n}}e^{-\alpha u}C(X_{u},A_{u})du)+\lambda\big)\big]\\&+\mathbb{E}^{\pi}_{(t,i,\lambda)}\big[U^{\prime}_{-}\big(e^{-\alpha t}\int_{0}^{T_{n}}e^{\alpha u}C(X_{u},A_{u})du)+\lambda\big)
\times \big(e^{-\alpha t}\int_{T_{n}}^{\infty}e^{-\alpha u}C(X_{u},A_{u})du\big)\big]\\
&\leq V_{n \pi}(t,i,\lambda)+U^{\prime}_{-}(\lambda)e^{-\alpha t}\frac{\bar{c}}{\alpha}\mathbb{E}^{\pi}_{t,i,\lambda}(e^{-\alpha T_{n}}).
\end{align*}
Now using \eqref{finite jump}, it can be shown by induction that $$\mathbb{E}^{\pi}_{(t,i,\lambda)}[e^{-\alpha T_{n}}]\leq (1-\epsilon+\epsilon e^{-\alpha \delta})^{n},$$ for all $n$. Thus we have,
\begin{align*}
V_{n}(t,i,\lambda)&\leq V_{\infty \pi}(t,i,\lambda)\leq V_{n \pi}(i,\lambda,t)+\epsilon_{n}(t,\lambda),
\end{align*} where $\epsilon_{n}(\lambda, t)=U^{\prime}_{-}(\lambda)e^{-\alpha t}\frac{\bar{c}}{\alpha}(1-\epsilon+\epsilon e^{-\alpha \delta})^{n}$. Thus $\lim_{n\rightarrow \infty}\epsilon_{n}(\lambda,t)=0$. Taking infimum over all policies we get,
$$V_{n}(t,i,\lambda)\leq V_{\infty}(i,\lambda,t)\leq V_{n}(t,i,\lambda)+\epsilon_{n}(t,\lambda).$$  
Now as $n\rightarrow \infty$ we have $V_{n}=T^{n}U\uparrow V_{\infty}$ for $n\rightarrow \infty$. Now, we try to show that $V_{\infty}=TV_{\infty}$. Note that $V_{n}\leq V_{\infty}$ for all $n$. Using the fact that $T$ is increasing we have $V_{n+1}=TV_{n}\leq TV_{\infty}$ for all n. Letting $n\rightarrow \infty$ implies $V_{\infty}\leq TV_{\infty}$.\\
For the reverse inequality we have from above $V_{n}+\epsilon_{n}\geq V_{\infty}$. Applying the $T$ operator and also using its monotonicity we get,
\begin{align*}
T(V_{n}+\epsilon_{n})(t,i,\lambda)&=\inf_{a\in A(i)}\sum_{j}\int \biggl(V_{n}\big(t+s,j,\lambda+\frac{C(i,a)}{\alpha}(e^{-\alpha t}-e^{-\alpha (t+s)})\big)\\&+\epsilon_{n}\big(t+s,\lambda+\frac{C(i,a)}{\alpha}(e^{-\alpha t}-e^{-\alpha (t+s)})\big)\biggr)Q(ds,j\vert i,a)\\
&\leq V_{n+1}+\epsilon_{n+1}.
\end{align*}          
Hence, we have $V_{n+1}+\epsilon_{n+1}\geq T(V_{n}+\epsilon_{n})\geq TV_{\infty}$. Now letting $n\rightarrow \infty$ we obtain $V_{\infty}\geq TV_{\infty}$. So together we have $V_{\infty}=TV_{\infty}$. Now,
\begin{align*}
T\bar{v}(t,\lambda)&=\inf_{a\in A(i)}\sum_{j}\int U\big(e^{-\alpha (t+s)}\frac{\bar{c}}{\alpha}+\lambda+\frac{C(i,a)}{\alpha}(e^{-\alpha t}-e^{\alpha (t+s)})\big)Q(ds,j\vert i,a)\\
&\leq \inf_{a\in A(i)}\sum_{j}\int U\big(e^{-\alpha (t+s)}\frac{\bar{c}}{\alpha}+\lambda+\frac{\bar{c}}{\alpha}(e^{-\alpha t}-e^{\alpha (t+s)})\big)Q(ds,j\vert i,a)=\bar{v}(t,\lambda).
\end{align*}
Hence we have $T^{n}\bar{v}$ is a decreasing sequence. Moreover, we have by iteration
\begin{align*}
&(T^{n}U)(t,i,\lambda)=\inf_{\pi\in \Pi^{M}}\mathbb{E}^{\pi}_{(t,i,\lambda)}\big[U\big(e^{-\alpha t}\int_{0}^{T_{n}}e^{-\alpha u}C(X_{u},A_{u})du+\lambda\big)\big]\\
&(T^{n}\bar{v})(t,i,\lambda)=\inf_{\pi\in \Pi^M}\mathbb{E}^{\pi}_{(t,i,\lambda)}\big[U\big(e^{-\alpha t}\frac{\bar{c}}{\alpha}e^{-\alpha T_{n}}+e^{-\alpha t}\int_{0}^{T_{n}}e^{-\alpha u}C(X_{u},A_{u})du+\lambda\big)\big].
\end{align*}         
Again using the inequality $U(\lambda_{1}+\lambda_{2})- U(\lambda_{1})\leq U^{\prime}_{-}(\lambda_{1})\lambda_{2}$, we have,       
\begin{align*}
0&\leq (T^{n}\bar{v})(i,\lambda,t)- (T^{n}U)(i,\lambda,t)\\
&\leq \sup_{\pi\in \Pi}\mathbb{E}^{\pi}_{(t,i,\lambda)}\big[U\big(e^{-\alpha t}\frac{\bar{c}}{\alpha}e^{-\alpha T_{n}}+e^{-\alpha t}\int_{0}^{T_{n}}e^{-\alpha u}C(X_{u},A_{u})du+\lambda)\\&-U(e^{-\alpha t}\int_{0}^{T_{n}}e^{-\alpha u}C(X_{u},A_{u})du+\lambda\big)\big]\\&
\leq \epsilon_{n}(\lambda,t),
\end{align*}          
and $\epsilon_{n}(\lambda,t)$ converges to zero as $n\rightarrow \infty$. Hence $T^{n}\bar{v}\downarrow V_{\infty}$ as $n\rightarrow \infty$.

For the uniqueness let $w$ be another solution of $w=Tw$ with $U(\lambda)\leq w \leq \bar{v}$. Then, by iteration we get $T^{n}U\leq w \leq T^{n}\bar{v}$, for all $n$. So, by taking $n\rightarrow \infty$ in the inequality and using the fact that $T^{n}\bar{v}\downarrow V_{\infty}$ and $T^{n}U\uparrow V_{\infty}$ we get the uniqueness.

 (b)  The existence of a minimizer follows from our Assumptions and standard measurable selection theorem. Now using the fact that $V_{\infty}(t,i,\lambda)\geq U(\lambda)$ we obtain
 $$V_{\infty}=\lim\limits_{n\rightarrow \infty}T^{n}_{f^{*}}V_{\infty}\geq \lim\limits_{n\rightarrow \infty}T^{n}_{f^{*}}U=\lim\limits_{n\rightarrow \infty}V_{n(f^{*},f^{*,.....})}=V_{f^{*}}\geq V_{\infty},$$
 where the last equation follows using dominated convergence theorem. Hence we get the optimality of the stationary policy given by $f^*$.\end{proof}
 Obviously it can be shown that for a policy $\pi=(f_{0},f_{1},.....)\in \Pi^{M}$ we have the following cost iteration: $V_{\infty \pi}=\lim_{n\rightarrow \infty}(T_{f_{0}}T_{f_{1}}...T_{f_{n-1}})U$ . For a stationary policy $\pi=(f,f,.....)$ the cost iteration becomes $V_{f}=T_fV_{f}$.     
          
The above Theorem tells us that from a computational point of view, the value function of the infinite horizon optimization problem can be approximated arbitrarily close by sandwiching between 
$T^{n}U$ and $T^{n}\bar{v}$. Moreover, also the policy improvement algorithm works in this setting. For that,
for $v\in B([0,\infty)\times E \times [0,\infty))$, we define the operator $(Lv)(t,i,\lambda,a):=\sum_{j}\int v\big(t+s,j,\lambda+\frac{C(i,a)}{\alpha}(e^{-\alpha t}-e^{-\alpha(t+s)})\big)Q(ds,j\vert i,a)$. Also, for any $f\in F$ and $(t,i,\lambda)\in [0,\infty)\times E \times [0,\infty)$ we set $D(t,i,\lambda,f):=\{a\in A(i): (Lv_{f})(i,\lambda,t,a)< V_{f}(i,\lambda,t)\}$. Then by arguments analogous to Theorem 4 in \cite{Rieder14}, the following Theorem can be proved.
\begin{theorem}[Policy Improvement]
	 Suppose $f\in F$. \\
	(a) Define $h\in F$ by $h(\cdot)\in D(\cdot,f)$ if the set $D(.,f)$ is non-empty and otherwise $h=f$. Then $V_h \leq V_f$ and the improvement is strict in states where $D(\cdot ,f) \neq \phi$.\\
	(b) If $D(\cdot,f)= \phi$ for all states, then $V_{f}=V_{\infty}$ and $f$ defines an optimal policy.\\
	(c) Suppose $f_{k+1}$ is a minimizer of $V_{f_{k}}$
	for $k\geq 0$ where $f_{0}=f$. Then $V_{f_{k+1}}\leq V_{f_{k}}$
	and $\lim_{k\rightarrow \infty}V_{f_{k}}=V_{\infty}$.
\end{theorem}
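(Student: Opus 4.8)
The plan is to derive all three parts from the monotonicity of the operators $L$, $T_f$ and $T$, together with three facts already available: the fixed point equations $V_f=T_fV_f$ and $V_\infty=TV_\infty$; the uniqueness of $V_\infty$ as a solution of $v=Tv$ in $B([0,\infty)\times E\times[0,\infty))$ from Theorem \ref{concave}(a); and the stationary cost iteration $T_g^nw\to V_g$, valid for every $g\in F$ and every $w\in B([0,\infty)\times E\times[0,\infty))$, which follows by the same sandwiching $T_g^nU\uparrow V_g$, $T_g^n\bar v\downarrow V_g$ as in the proof of Theorem \ref{concave}. I shall also use that $Tv\le T_gv$ for every $g$, that $V_\infty\le V_f$ for every $f$, and that $V_f\in B([0,\infty)\times E\times[0,\infty))$.

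For (a), the definition of $D(\cdot,f)$ shows that at each state where $D(\cdot,f)\neq\phi$ the selector $h$ satisfies $T_hV_f<V_f$, while where $D(\cdot,f)=\phi$ one has $h=f$ and hence $T_hV_f=T_fV_f=V_f$; thus $T_hV_f\le V_f$ everywhere, strictly where $D(\cdot,f)\neq\phi$. Applying $T_h$ and using monotonicity makes $\{T_h^nV_f\}_n$ nonincreasing, and since $V_f\in B([0,\infty)\times E\times[0,\infty))$ the cost iteration gives $T_h^nV_f\downarrow V_h$. Consequently $V_h\le T_hV_f\le V_f$, and at a state with $D(\cdot,f)\neq\phi$ the strict bound $T_hV_f<V_f$ yields $V_h<V_f$, which is the asserted strict improvement.

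For (b), if $D(\cdot,f)=\phi$ for all states then $(LV_f)(t,i,\lambda,a)\ge V_f(t,i,\lambda)$ for every $a\in A(i)$; taking the infimum over $a$ gives $TV_f\ge V_f$, while $TV_f\le T_fV_f=V_f$ always, so $TV_f=V_f$ and uniqueness in Theorem \ref{concave}(a) forces $V_f=V_\infty$, whence $f$ is optimal. For (c), since $f_{k+1}$ minimizes $V_{f_k}$ we have $T_{f_{k+1}}V_{f_k}=TV_{f_k}\le T_{f_k}V_{f_k}=V_{f_k}$, so iterating $T_{f_{k+1}}$ as in (a) yields $V_{f_{k+1}}=\lim_nT_{f_{k+1}}^nV_{f_k}\le TV_{f_k}\le V_{f_k}$. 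Thus $\{V_{f_k}\}_k$ is nonincreasing and bounded below by $V_\infty$, so it converges pointwise to some $V_*$ with $V_\infty\le V_*\le\bar v$, and it remains to identify $V_*$ with the fixed point.

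The crux is to show $V_*=TV_*$, after which uniqueness gives $V_*=V_\infty$ and the claimed convergence. One inequality is immediate: $V_*\le V_{f_k}$ implies $TV_*\le TV_{f_k}\le V_{f_k}$, and letting $k\to\infty$ gives $TV_*\le V_*$. For the reverse I would fix $a\in A(i)$ and pass to the limit in $V_{f_{k+1}}\le TV_{f_k}\le (LV_{f_k})(t,i,\lambda,a)$. Here the relevant integrands lie between $U(\lambda)$ and $U(\lambda+2\bar c/\alpha)$, so a constant majorant is integrable against the probability kernel $Q(\cdot\,|\,i,a)$ and dominated convergence gives $(LV_{f_k})(t,i,\lambda,a)\to(LV_*)(t,i,\lambda,a)$; hence $V_*(t,i,\lambda)\le(LV_*)(t,i,\lambda,a)$, and taking the infimum over $a$ produces $V_*\le TV_*$. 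The main obstacle is precisely this passage to the limit inside $L$: the integrand depends on $k$ both through $V_{f_k}$ and, in the minimizing form, through the optimizing action, so I keep $a$ fixed before sending $k\to\infty$ and only afterwards take $\inf_a$, thereby avoiding any interchange of $\inf_a$ with $\lim_k$. Assumption 2 then enters only to guarantee that the minimizers $f_{k+1}$ exist via measurable selection.
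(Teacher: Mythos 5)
Your proof is correct, and it follows essentially the same route the paper intends: the paper gives no details and simply invokes ``arguments analogous to Theorem 4 in \cite{Rieder14}'', which is exactly the standard policy-improvement scheme you carry out — monotone iteration of $T_h$ starting from $V_f$ for (a), the fixed-point identity $TV_f=V_f$ plus uniqueness from Theorem \ref{concave}(a) for (b), and the monotone limit $V_*$ identified as the fixed point of $T$ (keeping $a$ fixed before passing to the limit under $L$ by dominated convergence) for (c). Your write-up supplies precisely the details the paper leaves to the cited reference, including the needed stationary-policy cost iteration $T_g^n w\to V_g$ for $w\in B([0,\infty)\times E\times[0,\infty))$.
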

           
\subsection{Convex Utility Function}
Now we look into the case of a convex utility function. For that $U:\mathbb{R_{+}} \rightarrow \mathbb{R}$ be a convex utility function.  The functions $V_{n \pi}$, $V_{n}$, $V_{\infty \pi}$, $V_{\infty}$ are defined as in the previous section.
\begin{theorem}\label{convex}
Under Assumptions 1 and 2, the conclusions of Theorem \ref{concave} hold for convex utility function as well.
\end{theorem}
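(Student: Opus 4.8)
The plan is to re-run the proof of Theorem \ref{concave} essentially verbatim, after locating the single place where concavity was actually used. Scanning that proof, concavity enters only through the tangent estimate $U(\lambda_{1}+\lambda_{2})\le U(\lambda_{1})+U^{\prime}_{-}(\lambda_{1})\lambda_{2}$ together with the fact that $U^{\prime}_{-}$ is nonnegative and \emph{decreasing}. Everything else — monotonicity of $T$ and $T_{f}$, the geometric bound $\mathbb{E}^{\pi}_{(t,i,\lambda)}[e^{-\alpha T_{n}}]\le(1-\epsilon+\epsilon e^{-\alpha\delta})^{n}$ coming from Assumption 1, the fixed-point and squeeze arguments that yield $V_{\infty}=TV_{\infty}$ and its uniqueness, the measurable selection giving a minimizer $f^{*}\in F$, and the dominated-convergence step in part (b) — is insensitive to the sign of the curvature and carries over unchanged. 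So the only new work is to supply the convex analogue of the tangent estimate and to propagate it through the same chain of inequalities.

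For convex increasing $U$ the right derivative $U^{\prime}_{+}$ exists on $[0,\infty)$, is nonnegative and \emph{increasing}, and the secant inequality reads $U(\lambda_{1}+\lambda_{2})-U(\lambda_{1})\le U^{\prime}_{+}(\lambda_{1}+\lambda_{2})\lambda_{2}$ for $\lambda_{1},\lambda_{2}\ge0$. Writing $C_{n}$ for the augmented cost started from $(t,i,\lambda)$, one has $C_{\infty}-C_{n}=e^{-\alpha t}\int_{T_{n}}^{\infty}e^{-\alpha u}C(X_{u},A_{u})\,du\le e^{-\alpha t}\tfrac{\bar c}{\alpha}e^{-\alpha T_{n}}$ and, crucially, the total discounted cost is uniformly bounded, $C_{\infty}\le e^{-\alpha t}\tfrac{\bar c}{\alpha}+\lambda$. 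Applying the secant inequality with $\lambda_{1}=C_{n}$, $\lambda_{2}=C_{\infty}-C_{n}$ and using that $U^{\prime}_{+}$ is increasing gives
\begin{align*}
U(C_{\infty})-U(C_{n})\le U^{\prime}_{+}\Big(e^{-\alpha t}\tfrac{\bar c}{\alpha}+\lambda\Big)\,e^{-\alpha t}\tfrac{\bar c}{\alpha}\,e^{-\alpha T_{n}}.
\end{align*}

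Taking $\mathbb{E}^{\pi}_{(t,i,\lambda)}$ and inserting the bound on $\mathbb{E}^{\pi}_{(t,i,\lambda)}[e^{-\alpha T_{n}}]$ produces an error term $\tilde\epsilon_{n}(t,\lambda)=U^{\prime}_{+}(e^{-\alpha t}\tfrac{\bar c}{\alpha}+\lambda)\,e^{-\alpha t}\tfrac{\bar c}{\alpha}(1-\epsilon+\epsilon e^{-\alpha\delta})^{n}\to0$ with $V_{n\pi}\le V_{\infty\pi}\le V_{n\pi}+\tilde\epsilon_{n}$; taking the infimum over $\pi$ yields $V_{n}\le V_{\infty}\le V_{n}+\tilde\epsilon_{n}$, hence $T^{n}U=V_{n}\uparrow V_{\infty}$. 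The same estimate applied to the extra summand $e^{-\alpha t}\tfrac{\bar c}{\alpha}e^{-\alpha T_{n}}$ in the iterated expression for $T^{n}\bar v$ — again legitimate because $C_{n}+e^{-\alpha t}\tfrac{\bar c}{\alpha}e^{-\alpha T_{n}}\le e^{-\alpha t}\tfrac{\bar c}{\alpha}+\lambda$ — shows $0\le T^{n}\bar v-T^{n}U\le\tilde\epsilon_{n}$, so $T^{n}\bar v\downarrow V_{\infty}$. With both monotone limits in hand, $V_{\infty}=TV_{\infty}$, its uniqueness in $B([0,\infty)\times E\times[0,\infty))$ via $T^{n}U\le w\le T^{n}\bar v$, and the existence and optimality of the stationary policy determined by $f^{*}$ follow word for word from Theorem \ref{concave}.

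The hard part is precisely the reversed monotonicity of the derivative. In the concave case $U^{\prime}_{-}$ is decreasing, so evaluating it at the smallest admissible argument $\lambda$ gave a bound free of the random $C_{n}$; for convex $U$ the increasing derivative $U^{\prime}_{+}$ admits no such cheap bound at $\lambda$. The resolution, and the only genuinely model-specific point, is that in the discounted setting with bounded running cost the total cost never exceeds $e^{-\alpha t}\tfrac{\bar c}{\alpha}+\lambda$, which lets me evaluate $U^{\prime}_{+}$ at the \emph{largest} admissible argument and still obtain a finite, $n$-independent prefactor. One should also note that $U^{\prime}_{+}(e^{-\alpha t}\tfrac{\bar c}{\alpha}+\lambda)$ is finite, which is automatic since a finite-valued convex function has finite one-sided derivatives on its domain; thus the hypothesis that $U^{\prime}(0)$ exist is not even needed here.
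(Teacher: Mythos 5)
Your proposal is correct and follows essentially the same route as the paper: the paper's proof of Theorem \ref{convex} likewise replaces the concave tangent estimate by the convex inequality $U(\lambda_{1}+\lambda_{2})\le U(\lambda_{1})+U^{\prime}_{+}(\lambda_{1}+\lambda_{2})\lambda_{2}$, bounds $U^{\prime}_{+}$ at the maximal total cost $e^{-\alpha t}\tfrac{\bar c}{\alpha}+\lambda$ using that the discounted cost is uniformly bounded, and obtains exactly your error term $\delta_{n}(t,\lambda)=U^{\prime}_{+}(e^{-\alpha t}\tfrac{\bar c}{\alpha}+\lambda)(e^{-\alpha t}\tfrac{\bar c}{\alpha})(1-\epsilon+\epsilon e^{-\alpha\delta})^{n}$ before concluding as in Theorem \ref{concave}. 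Your closing remark that no hypothesis on $U^{\prime}(0)$ is needed in the convex case is also consistent with the paper, whose statement of Theorem \ref{convex} indeed drops that assumption.
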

\begin{proof}
 The proof of this Theorem is similar to that of Theorem \ref{concave}. The main difference is that now we need to use the following property of convex function. Since $U:\mathbb{R_{+}}\rightarrow \mathbb{R}$  strictly increasing and convex we have the inequality 
$$U(\lambda_{1}+\lambda_{2})\leq U(\lambda_{1})+U^{\prime}_{+}(\lambda_{1}+\lambda_{2})\lambda_{2},    \quad\lambda_{1},\lambda_{2}\geq 0,$$
where $U^{\prime}_{+}$ is the right-hand side derivative of $U$ that exists since $U$ is convex. Moreover, $U^{\prime}_{+}(\lambda)\geq 0$ and $U^{\prime}_{+}$ is increasing. For $(t,i,\lambda)\in [0,\infty)\times E\times [0,\infty)$ and $\pi \in \Pi$ we have,
\begin{align*}
V_{n}(t,i,\lambda)&\leq V_{n \pi}(t,i,\lambda)\leq V_{\infty \pi}(t,i,\lambda)=\mathbb{E}^{\pi}_{(t,i,\lambda)}\big[U\big(e^{-\alpha t}\int_{0}^{\infty}e^{-\alpha u}C(X_{u},A_{u})du)+\lambda\big)\big]\\
&=\mathbb{E}^{\pi}_{(t,i,\lambda)}\big[U\big(e^{-\alpha t}\int_{0}^{T_{n}}e^{-\alpha u}C(X_{u},A_{u})du)+\lambda+e^{-\alpha t}\int_{T_{n}}^{\infty}e^{-\alpha u}C(X_{u},A_{u})du\big)\big]\\
&\leq \mathbb{E}^{\pi}_{(t,i,\lambda)}\big[U\big(e^{-\alpha t}\int_{0}^{T_{n}}e^{-\alpha u}C(X_{u},A_{u})du)+\lambda\big)\big]\\&+\mathbb{E}^{\pi}_{(t,i,\lambda)}\big[U^{\prime}_{+}\big(e^{-\alpha t}\int_{0}^{\infty}e^{-\alpha u}C(X_{u},A_{u})du)+\lambda\big)
\times \big (e^{-\alpha t}\int_{T_{n}}^{\infty}e^{-\alpha u}C(X_{u},A_{u})du\big)\big]\\
&\leq V_{n \pi}(t,i,\lambda)+U^{\prime}_{+}(e^{-\alpha t}\frac{\bar{c}}{\alpha}+\lambda)(e^{-\alpha t}\frac{\bar{c}}{\alpha})(1-\epsilon+\epsilon e^{-\alpha \delta})^{n},\\
&=V_{n \pi}(t,i,\lambda)+\delta_{n}(t,\lambda),
\end{align*}
where $\delta_{n}(\lambda,t)=U^{\prime}_{+}(e^{-\alpha t}\frac{\bar{c}}{\alpha}+\lambda)(e^{-\alpha t}\frac{\bar{c}}{\alpha})(1-\epsilon+\epsilon e^{-\alpha \delta})^{n}$. As $n\rightarrow \infty$, $\lim\limits_{n\rightarrow \infty}\delta_{n}(\lambda,t)=0$. Taking the infimum over all policies in the
above inequality yields
$$V_{n}(t,i,\lambda)\leq V_{\infty}(t,i,\lambda)\leq V_{n}(t,i,\lambda)+\delta_{n}(t,\lambda).$$
Letting $n\rightarrow \infty$ yields $\lim\limits_{n\rightarrow \infty}T^{n}U=V_{\infty}.$
Again, using the same inequality we have
\begin{align*}
0&\leq (T^{n}\bar{v})(t,i,\lambda)- (T^{n}U)(t,i,\lambda)\\
&\leq \sup_{\pi\in \Pi}\mathbb{E}^{\pi}_{(t,i,\lambda)}\big[U\big(e^{-\alpha t}\frac{\bar{c}}{\alpha}e^{-\alpha T_{n}}+e^{-\alpha t}\int_{0}^{T_{n}}e^{-\alpha u}C(X_{u},A_{u})du+\lambda)\\&-U(e^{-\alpha t}\int_{0}^{T_{n}}e^{-\alpha u}C(X_{u},A_{u})du+\lambda \big)\big]\\&
\leq U^{\prime}_{+}(e^{-\alpha t}\frac{\bar{c}}{\alpha}+\lambda)(e^{-\alpha t}\frac{\bar{c}}{\alpha})(1-\epsilon+\epsilon e^{-\alpha \delta})^{n} =\delta_{n}(t,\lambda).
\end{align*}    
The rest of the arguments are same as Theorem \ref{concave}.
\end{proof}
The policy improvement algorithm for convex utility functions works in exactly the same way as for the concave case.
The following corollary is easy to deduce from Theorems \ref{concave} and \ref{convex}.

\begin{corollary}\label{cor2}
	In case $U(\lambda)=(\frac{1}{\gamma})e^{\gamma \lambda}$ with $\gamma \neq 0$, we obtain $V_{\infty}(t,i,\lambda)=e^{\gamma \lambda}h_{\infty}(t,i)$ and $J_{\infty}(i)=h_{\infty}(0,i)$. And the function $h_{\infty}$ is the unique fixed point of
	\begin{align}
	h_{\infty}(i,t)=\inf_{a\in A(i)}\sum_{j}\int e^{\gamma \frac{C(i,a)}{\alpha}(e^{-\alpha t}-e^{-\alpha (t+s)})} h_{\infty}(t+s,j)Q(ds,j\vert i,a).
	\end{align}
	with $\frac{1}{\gamma}\leq h_{\infty}(t,i)\leq \frac{1}{\gamma}e^{\gamma e^{-\alpha t}\frac{\bar{c}}{\alpha}}$.
\end{corollary}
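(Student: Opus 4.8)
The plan is to reduce the claim entirely to Theorems \ref{concave} and \ref{convex}, combined with the finite-horizon product structure already recorded in Corollary \ref{cor1}. First I would observe that the exponential utility $U(\lambda)=\frac{1}{\gamma}e^{\gamma\lambda}$ is strictly increasing for every $\gamma\neq 0$ (since $U'(\lambda)=e^{\gamma\lambda}>0$), that it is convex when $\gamma>0$ and concave when $\gamma<0$, and that $U'(0)=1$ exists. Hence exactly one of Theorem \ref{concave} or Theorem \ref{convex} applies in either case, and in both we may use that $V_\infty$ is the unique fixed point of $T$ in $B([0,\infty)\times E\times[0,\infty))$ and that $V_n=T^{n}U\to V_\infty$ pointwise as $n\to\infty$.

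Next I would transport the product structure to the limit. By Corollary \ref{cor1} we have $V_n(t,i,\lambda)=e^{\gamma\lambda}h_n(t,i)$ for every $n$, so in particular $h_n(t,i)=V_n(t,i,0)$. Setting $h_\infty(t,i):=V_\infty(t,i,0)=\lim_{n}h_n(t,i)$ (the limit existing because $V_n(t,i,0)\to V_\infty(t,i,0)$), I would pass to the limit in the identity $V_n(t,i,\lambda)=e^{\gamma\lambda}h_n(t,i)$ to obtain $V_\infty(t,i,\lambda)=e^{\gamma\lambda}h_\infty(t,i)$ for all $(t,i,\lambda)$. In particular $J_\infty(i)=V_\infty(0,i,0)=h_\infty(0,i)$.

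The fixed-point equation for $h_\infty$ then drops out of $V_\infty=TV_\infty$ by direct substitution: inserting $V_\infty=e^{\gamma\lambda}h_\infty$ into the definition \eqref{minimizing operator} of $T$, the factor $e^{\gamma\lambda}$ pulls out of both the integral and the infimum exactly as in the computation of Corollary \ref{cor1}, and cancelling the common $e^{\gamma\lambda}>0$ leaves precisely the stated equation for $h_\infty$. The bounds come out the same way from $V_\infty\in B([0,\infty)\times E\times[0,\infty))$: the inequalities $U(\lambda)\le V_\infty(t,i,\lambda)\le U(e^{-\alpha t}\tfrac{\bar{c}}{\alpha}+\lambda)$ read $\tfrac{1}{\gamma}e^{\gamma\lambda}\le e^{\gamma\lambda}h_\infty(t,i)\le \tfrac{1}{\gamma}e^{\gamma(e^{-\alpha t}\bar{c}/\alpha+\lambda)}$, and dividing through by $e^{\gamma\lambda}>0$ gives $\tfrac{1}{\gamma}\le h_\infty(t,i)\le \tfrac{1}{\gamma}e^{\gamma e^{-\alpha t}\bar{c}/\alpha}$.

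The one point requiring care, which I expect to be the main obstacle, is that uniqueness for $h_\infty$ must be transferred correctly through the map $h\mapsto v$, $v(t,i,\lambda):=e^{\gamma\lambda}h(t,i)$. The hard part is verifying cleanly, and uniformly in the sign of $\gamma$, that this map is a bijection between solutions of the $h$-equation obeying the displayed bounds and fixed points of $T$ lying in $B([0,\infty)\times E\times[0,\infty))$: one checks that the bounds on $h$ are equivalent to the membership $v\in B$ (here $e^{\gamma\lambda}>0$ guarantees no inequality reverses, even when $\gamma<0$ makes $\tfrac{1}{\gamma}<0$), and that $h$ solves its equation if and only if $v=Tv$. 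Granting this correspondence, any other admissible solution $\tilde{h}$ produces a fixed point $\tilde{v}=e^{\gamma\lambda}\tilde{h}\in B$, so $\tilde{v}=V_\infty$ by the uniqueness part of Theorem \ref{concave} or \ref{convex}, whence $\tilde{h}=h_\infty$.
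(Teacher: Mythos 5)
Your proposal is correct and follows exactly the route the paper intends: the paper gives no explicit proof of Corollary \ref{cor2}, stating only that it is ``easy to deduce from Theorems \ref{concave} and \ref{convex},'' and your argument — passing to the limit in the product form $V_n=e^{\gamma\lambda}h_n$ from Corollary \ref{cor1}, reading the fixed-point equation and bounds off $V_\infty=TV_\infty$ and $V_\infty\in B([0,\infty)\times E\times[0,\infty))$, and transferring uniqueness through the bijection $h\mapsto e^{\gamma\lambda}h$ — is precisely that deduction, with the case split on the sign of $\gamma$ (convex vs.\ concave, $U'(0)=1$) handled correctly.
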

\begin{remark}Few remarks are in order.
\begin{enumerate}
\item We see from Corollaries \ref{cor1} and \ref{cor2} that in the case of exponential utility, the case which is classically referred to as the risk-sensitive control in literature, the value functions split. Thus, for the exponential utility, the optimal controls as given by Theorems \ref{finite}, \ref{concave} and \ref{convex} will not depend on the accumulated cost and thus they belong to the original non-augmented policy sets.
\item The dependence of the optimal policies on the jump times is not surprising. Because, in the risk-sensitive control literature it is known that in presence on discounting, for discrete-time Markov chains, continuous-time Markov chains as well as for diffusions,  optimal policies do depend on time.
                                
\end{enumerate}
\end{remark}

\bibliographystyle{plain}
\bibliography{bib}

\end{document}